

\documentclass[11pt]{article} 

\usepackage[utf8]{inputenc} 


\usepackage{amsmath,amssymb,amsfonts,amsthm}
\usepackage{geometry} 
\geometry{a4paper} 

\usepackage{graphicx} 


\usepackage{booktabs} 
\usepackage[dvipsnames]{xcolor}
\usepackage[disable]{todonotes}
\usepackage{array} 
\usepackage{paralist} 
\usepackage{verbatim} 
\usepackage{subfig} 
\usepackage{ytableau}

\usepackage{fancyhdr} 
\pagestyle{fancy} 
\lhead{}\chead{}\rhead{}
\lfoot{}\cfoot{\thepage}\rfoot{}

\usepackage{sectsty}
\allsectionsfont{\sffamily\mdseries\upshape} 

\usepackage[nottoc,notlof,notlot]{tocbibind} 
\usepackage[titles,subfigure]{tocloft} 

\newtheorem{conjecture}{Conjecture}


\title{Kostka Numbers and Longest Increasing Subsequences}
\author{Arjun Krishnan \and Scott Neville}
\date{}


\newcommand{\Secref}[1]{Section~\ref{#1}}
\newcommand{\Thmref}[1]{Theorem~\ref{#1}}

\newcommand{\Lemref}[1]{Lemma~\ref{#1}}




\newcommand{\AND}{\textrm{ and }} 
\newcommand{\OR}{\textrm{ or }}










\renewcommand{\emph}[1]{\textbf{#1}}

\theoremstyle{plain}
\newtheorem{theorem}{Theorem}[section]
\newtheorem*{theorem*}{Theorem}
\newtheorem{lemma}[theorem]{Lemma}
\newtheorem*{lemma*}{Lemma}
\newtheorem{corollary}[theorem]{Corollary}

\newtheorem*{prop*}{Proposition}

\theoremstyle{definition}
\newtheorem{define}[theorem]{Definition}
\newtheorem{algorithm}[theorem]{Algorithm}

\newtheorem{example}{Example}
\newtheorem*{example*}{Example}
\newtheorem{claim}[theorem]{Claim}

\theoremstyle{remark}

\newtheorem*{remark*}{Remark}

\usepackage[sorting=none,natbib=true,citestyle=authoryear,style=numeric,backend=biber]{biblatex}
\addbibresource{master-bibtex/master-bibtex.bib}


\newcommand{\sqtableau}[2]{\square_{#1 \times #2}}
\newcommand{\SNminus}{\ensuremath{-}}
\newcommand{\SNcomplement}[2][]{\overline{#2}^{#1}}
\newcommand{\kostka}[2]{K_{#1 #2}}
\newcommand{\kmu}[2][]{\mu_{#2}^{#1}} 
\newcommand{\colcomp}[2][n]{\overline{#2}^{#1}}

\newcommand{\lislessk}[2]{\mathcal{L}_{#1}(#2)}



\begin{document}
\maketitle

\begin{abstract}
A classical bijection relates certain Kostka numbers, the Catalan numbers, and permutations of length $n$ with longest increasing subsequence (LIS) of length at most $2.$ We generalize this bijection and find Kostka numbers which count the number of permutations of $n$ with LIS length at most $w,$ the number of permutations with $(1, \cdots, w)$ as a LIS, and other similar subsets of permutations.
\end{abstract}


\section{Introduction}

The Kostka numbers $\kostka{\lambda}{\mu}$ appear naturally in symmetric function theory and in several counting problems related to the symmetric group $S_n.$ They are indexed by a partition $\lambda$ of $n$ (written $\lambda \vdash n$) and a vector with non-negative integer entries $\mu$ such that $\sum_i \mu_i = n$. They count the number of Young tableaux with shape $\lambda$ that are filled with $\mu_i$ copies of the number $i$. $\mu$ is called the \emph{content} (or \emph{weight}) vector. 

The Kostka numbers $K_{\lambda \mu}$ were originally \citep{kostka1882ueber} defined as the monomial coefficients of the Schur functions $(s_\lambda)$
$$s_\lambda = \sum_{\mu \, \vdash n} K_{\lambda \mu} m_\mu$$
where $\lambda \vdash n$, and $m_\mu$ is the monomial symmetric function. \citet{littlewood1938construction} later discovered the combinatorial interpretation we gave above. 

Given $\sigma = (\sigma_1, \sigma_2, \cdots, \sigma_n) \in S_n$, $(\sigma_{i_1}, \sigma_{i_2}, \ldots , \sigma_{i_w})$ is an increasing subsequence in $\sigma$ if  $i_1 < i_2 < \cdots < i_w$  and $\sigma_{i_1} < \sigma_{i_2} < \cdots < \sigma_{i_w}$. Consider the set $\lislessk{n}{w} \subset S_n$, whose elements have longest increasing subsequences (LIS) of length at most $w$. Let $u_n(w) = | \lislessk{n}{w} |$. It is a classical fact that $u_n(2) = C_n$, where $C_n$ is the $n$\textsuperscript{th} Catalan number. $u_n(2)$ is related to a certain Kostka number through a classical operation on Young tableaux (discussed below). We generalize this bijection to count $\lislessk{n}{w}$ and other subsets such as $\{ \sigma \in S_n \colon (1,2,\ldots w) \text{ is an LIS} \}$ for $w > 2$ using related Kostka numbers. Computational evidence suggests that the latter Kostka numbers also count $r$-colored non-crossing partitions of $[n]$ \citep{MR3139391}, which are themselves counted by certain vascillating tableaux \citep{MR2272140,MR2873884} (see Conjecture \ref{conj:len k end k and noncrossing}).

Gessel provided a formula for $u_n(w)$ in terms of determinants of modified Bessel functions \citep{MR1041448}; these formulas simplify for $w \leq 3$ and may be found in the same paper. By the Robinson-Schensted (RS) correspondence, $u_n(w)$ are counted by pairs of standard tableaux of the same shape with at most $w$ columns. Let $y_n(w)$ be the number of standard tableaux with at most $w$ columns. Regev gives formulas for $y_n(w)$ for $w = 2 \AND 3$, and \citet{MR977181} gives combinatorial proofs and formulas for $2 \leq w \leq 5$. Wilf \cite{MR1156657} gives a formula for $u_n(w)$ for $w = 2,4,\ldots$ in terms of $y_j(w)$, $j=1,\ldots,2n$; thus formulas for $u_n(w)$ for $w=4$ may be obtained from the formulas for $y_n(w)$. \citet{MR1080995} gives formulas for both $y_n(w)$ and $u_n(w)$ for large $w$ (depending on $n$). Many of these facts and references may be found in Stanley's book \citep[pp 452,493,problem 7.16]{MR1676282}. 


The Robinson-Schensted (RS) correspondence is a bijective map between permutations in $S_n$ and pairs of standard Young tableau of the same shape $\lambda \vdash n$ \citep{MR0062127,MR0121305}. The length of the top row (or width) of $\lambda$ is the length of a LIS. Thus, via the RS correspondence, $\lislessk{n}{2}$ is in bijection with pairs of tableaux $(P,Q)$ with width at most $2$ and size $n$. There is an appealing, classical way to convert this pair of tableaux into a single rectangular tableau of shape $\sqtableau{2}{n}$, where $\sqtableau{w}{n}$ denotes a rectangular diagram with width $w$ and height $n$ \citep{macmahon1915combinatorial}; \citep[page 263, problem 6.19.ww]{MR1676282}. First replace $i$ with $2n-i+1$ for $i \in [n]$ in the tableau $Q$. Then, rotate $Q$ by $180$ degrees and glue it to $P$; the gluing is done so that the rows with length $1$ in $P$ align with the rows of length $1$ in $Q$. Thus we obtain a rectangular tableau with width $2$ and height $n$. We illustrate this cutting, relabeling and gluing procedure in Example \ref{ex:basic catalan number cutting and gluing procedure}.

\begin{example}
    \label{ex:basic catalan number cutting and gluing procedure}
$$
\begin{ytableau}
1 & 2 \\
3 
\end{ytableau} \quad
\begin{ytableau}
1 & 3 \\
2
\end{ytableau} \mapsto
\begin{ytableau}
1 & 2 \\
3 
\end{ytableau} \quad
\begin{ytableau}
6 & 4 \\
5
\end{ytableau} \mapsto
\begin{ytableau}
1 & 2 \\
3 & 5 \\
4 & 6
\end{ytableau}  
$$
\label{example:simple rectangular transformation for catalan numbers and lis of length at most 2}
\end{example}

It is not hard to see that the steps of the transformation are invertible, and that it is a bijection. Let $\vec{a}_k$ be a vector of $k$ copies of the number $a$. The number of rectangular tableaux of shape $2 \times n$ is given by the Kostka number $\kostka{\sqtableau{2}{n}}{\vec{1}_{2n}}$. Since the rectangular tableaux are standard ---all the weights are $1$--- we can use the hook length formula \citep{MR0062127} to count them. Hence,
\begin{equation}
    u_n(2) = \kostka{\sqtableau{2}{n}}{\vec{1}_{2n}} = \frac{(2n!)}{n! (n+1)!} = C_n.
    \label{eq:kostka catalan numbers}
\end{equation}

\section{Main results}

We first state a generalization of \eqref{eq:kostka catalan numbers}. 
\begin{theorem}
    Let $\kmu[n]{a} = \vec{a}_n \oplus \vec{1}_n$ be the vector of $n$ copies of $a$ and $n$ copies of $1$. Then,
    \[
        \kostka{\sqtableau{w}{n}}{\kmu[n]{w-1}} =  u_n(w). 
    \]
    \label{thm:kostka numbers same as number of lis of length at most k}
\end{theorem}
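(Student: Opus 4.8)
The plan is to generalize the cutting-relabeling-gluing bijection from the $w=2$ case. Starting from $\sigma \in \lislessk{n}{w}$, the Robinson--Schensted correspondence gives a pair $(P,Q)$ of standard Young tableaux of the same shape $\lambda \vdash n$ with at most $w$ columns. The target object is a semistandard tableau of rectangular shape $\sqtableau{w}{n}$ with content $\kmu[n]{w-1} = \vec{(w-1)}_n \oplus \vec{1}_n$, i.e.\ filled with $n$ copies of the entry $w-1$ (equivalently $w$ copies each of $1, \dots, w-1$ after a relabeling — one must be careful which convention makes the entries weakly increasing) and the entries $n$ "large" distinct values. So the combinatorial heart is: build an explicit bijection between $\{(P,Q) : \text{same shape } \lambda \subseteq (w^n)\}$ and $\mathrm{SSYT}(\sqtableau{w}{n}, \kmu[n]{w-1})$.

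First I would fix the relabeling conventions so the entries of the glued tableau are weakly increasing along rows and strictly increasing down columns. The natural idea: take $P$ (standard, entries $1,\dots,n$, shape $\lambda$) and place it in the top of the $w \times n$ rectangle, left-justified. The complement of $\lambda$ inside the $w\times n$ rectangle, rotated $180^\circ$, is the shape $\lambda^c$ into which we place (a relabeled, rotated) copy of $Q$. In the $w=2$ case $Q$ gets relabeled $i \mapsto 2n+1-i$ and rotated, and the complement of a shape with $\le 2$ columns inside $2\times n$ is again a straight shape that glues on. For general $w$ the key geometric fact is that the $180^\circ$ rotation of the complement of $\lambda$ (with $\le w$ columns, $n$ rows after padding) inside $\sqtableau{w}{n}$ is itself a straight shape — in fact it is $\lambda'$-like — so that a column-strict filling of it by the "large" symbols $\{n+1,\dots\}$ coming from $Q$ glues to $P$ to make a genuine rectangular SSYT. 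Then I would check the content: $P$ contributes the $n$ distinct small entries, $Q$ contributes $n$ entries that I want all relabeled to the single symbol "$w-1$" (or rather, that the rectangular SSYT with content $\kmu{w-1}$ corresponds, by straightening/RSK-type arguments, exactly to a choice of $\lambda$ and a standard filling $Q$ of the complement). The cleanest route may be to invoke a known product/branching identity for Kostka numbers rather than building the map by hand: $\kostka{\sqtableau{w}{n}}{\vec{(w-1)}_n \oplus \vec{1}_n} = \sum_{\lambda} \kostka{\lambda}{\vec{1}_n}\, \kostka{(w^n)/\lambda}{\vec{(w-1)}_n}$ obtained by splitting the content into its "$1$" part and its "$w-1$" part, and then showing $\kostka{(w^n)/\lambda}{\vec{(w-1)}_n} = [\lambda \subseteq (w^n)\text{ with } \le w \text{ cols}] = \kostka{\lambda}{\vec 1_n}$-compatible count of standard tableaux, i.e.\ that the skew Kostka number of the complement with that flat content equals $f^{\lambda'}$ or $f^\lambda$ appropriately. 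Combined with $u_n(w) = \sum_{\lambda \subseteq (w^n)} (f^\lambda)^2$ from RS, the two sums match term by term.

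Concretely the steps, in order, are: (1) recall $u_n(w) = \sum_{\lambda \vdash n,\ \lambda_1' \le w} (f^\lambda)^2$ via RS, where $f^\lambda = \kostka{\lambda}{\vec 1_n}$; (2) expand the rectangular Kostka number by the "split the content" rule into $\sum_\lambda \kostka{\lambda}{\vec{1}_n}\,\kostka{(w^n)/\lambda}{\vec{(w-1)}_n}$ — this uses that a SSYT of shape $(w^n)$ with content $\vec 1_n \oplus \vec{(w-1)}_n$ restricts on its smallest $n$ entries to a standard tableau of some $\lambda$ and on the rest to a skew SSYT of shape $(w^n)/\lambda$; (3) prove the identity $\kostka{(w^n)/\lambda}{\vec{(w-1)}_n} = f^\lambda$ — equivalently, by the $180^\circ$ rotation, that skew SSYT of shape $(w^n)/\lambda$ with flat content $\vec{(w-1)}_n$ are in bijection with SYT of the rotated complement shape, which is itself a straight shape with $n$ rows; here one uses that $\vec{(w-1)}_n$ being a rectangular content and the shape having exactly $n$ rows forces, after rotation, each "row" to receive exactly one of each symbol value in a forced pattern — a direct RSK/lattice-word check — giving exactly $f^{\widetilde\lambda}$ for a shape $\widetilde\lambda$ with $f^{\widetilde\lambda} = f^\lambda$; (4) conclude $\kostka{\sqtableau{w}{n}}{\kmu[n]{w-1}} = \sum_\lambda (f^\lambda)^2 = u_n(w)$, noting the sum is automatically restricted to $\lambda$ with at most $w$ columns since $\lambda \subseteq (w^n)$.

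The main obstacle is step (3): showing the skew Kostka number $\kostka{(w^n)/\lambda}{\vec{(w-1)}_n}$ equals $f^\lambda$ (not just bounding it). The subtlety is that with content $\vec{(w-1)}_n$ one is filling the complement of $\lambda$ in the $w\times n$ box with $n$ copies of each of the values $1,\dots,?$ — wait, rather with the single repeated arrangement forced by semistandardness — and one must verify both that column-strictness plus row-weakness leave exactly $f^\lambda$ choices and that no "extra" fillings sneak in because the shape is tall and thin. I expect the right tool is the classical fact that $\kostka{\mu'}{\vec 1} $-type duality plus the observation that the $180^\circ$-rotated complement of $\lambda$ in $(w^n)$ is the conjugate partition of $(w^n)/\text{(something)}$ shifted so that its SYT count is $f^\lambda$; making this geometric correspondence precise, with the relabeling $i \mapsto wn+1-i$ on the $Q$-tableau exactly as in Example \ref{ex:basic catalan number cutting and gluing procedure} but adapted to width $w$, is where the real work lies. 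Once the shape/relabeling bookkeeping is nailed down, everything else is the standard RS dictionary.
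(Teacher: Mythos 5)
Your overall architecture is essentially the paper's: reduce via Robinson--Schensted to counting pairs of same-shape standard tableaux with at most $w$ columns, then decompose a rectangular tableau of shape $\sqtableau{w}{n}$ and content $\kmu[n]{w-1}$ into a $\lambda$-shaped piece carrying the $n$ singleton values (which, after a $180^\circ$ rotation and the relabeling $x\mapsto 2n-x+1$, becomes one SYT $Q$) and a complementary straight-shape piece carrying the repeated values, which must be shown to be equinumerous with SYT of shape $\lambda$. Your sum $\sum_\lambda \kostka{\lambda}{\vec{1}_n}\,\kostka{\sqtableau{w}{n}/\lambda}{\overrightarrow{(w-1)}_{n}}$ is just the counting-identity form of the paper's explicit bijection. (One bookkeeping slip: in $\kmu[n]{w-1}$ the values $1,\dots,n$ are the ones repeated $w-1$ times and $n+1,\dots,2n$ are the singletons, so the standard part sits on the \emph{largest} entries, in the bottom-right of the rectangle; you need the symmetry of Kostka numbers under permuting the content to justify your ordering, and your parenthetical ``$w$ copies each of $1,\dots,w-1$'' misreads the weight vector.)

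The genuine gap is exactly where you place it, and the sketch you offer for step (3) does not contain the idea that closes it. The identity $\kostka{\sqtableau{w}{n}/\lambda}{\overrightarrow{(w-1)}_{n}}=f^\lambda$ is not a rotation-plus-conjugation statement: the complementary region has $(w-1)n$ cells filled with $w-1$ copies of each of $n$ values, so it is not an SYT of any rotated or conjugated shape, and there is no ``forced pattern'' row by row. The mechanism (the paper's, following Stanley's exercise 7.41) is a \emph{column complement}: each value occurs $w-1$ times and at most once per column among $w$ columns, hence is absent from exactly one column; replacing each column $C$ by $[n]\setminus C$ and reversing the column order produces a filling with exactly one copy of each value, of shape $\lambda$ (this is the paper's Claim \ref{claim:complement gives the right shape} together with Claim \ref{claim:complement gives correct entries in tableau}). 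The real work is showing the result is actually a tableau, which requires proving that complementation reverses the columnwise order $\preceq$ (the paper's Claim \ref{claim:column complement correctly reverses order}, an inductive argument), and the fact that the operation is an involution is what upgrades the count to a bijection. Without identifying this operation your step (3) remains an assertion, and the duality route you gesture at will not produce it.
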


The Kostka numbers in \Thmref{thm:kostka numbers same as number of lis of length at most k} count rectangular tableaux with particular weight vectors. The proof establishes a bijection between  these rectangular tableaux and $\lislessk{n}{w}$ by generalizing the argument that proves \eqref{eq:kostka catalan numbers}. The generalization involves a standard involutive operation on columns \citep[page 473, problem 7.41]{MR1676282} (see section \ref{sec:algorithm}).

Next, we consider a further generalization of the algorithm by skewing the weight vector $\mu$.  The \emph{skew} weight vectors have an additional parameter, $k \in \mathbb{Z}$. We define 
\begin{equation}
    \kmu[n,k]{a} =  \vec{a}_{n-k} \oplus \vec{1}_{ak + n}
    \label{eq:definition of skew weights}
\end{equation}
so that $\sum_i \left( \kmu[n,k]{a} \right)_i = (a+1)n$; this last constraint ensures that the weights fill rectangular tableaux of width $a+1$ and height $n$. The magnitude of $k$ indicates how asymmetric the weight vector is. Clearly we must have $k \leq n$, and the constraint $n + k a \geq 0$ implies that $k$ must be an integer in the interval $[-\frac n a, n]$. When the number $n$ is understood, we will simply use $\kmu[k]{a}$. If $k = 0$, we will drop $k$ from the notation and simply write $\kmu{a}$, since $\kmu[n]{a} = \kmu[n,0]{a}$.

We will also use the following generalization of the Catalan numbers $A_{n,m}$ from \citet{Griffiths2011}. These count the number of standard rectangular tableaux of width $n$ and height $m.$
 When $m=2$, again, these are the Catalan numbers. Using the hook-length formula, simple formulas for $A_{n,m}$ may be obtained:
$$A_{n,m} = \frac{(mn)! s(m-1) s(n-1)}{s(m+n-1)}$$
where $s$ denotes the super factorial function $s(k) = \prod_{i=1}^k i!$.  
\begin{theorem}
For all $n$, and any $2 \leq w < n$, fix a nonzero integer $k$ in the interval $[-n/(w-1), n]$. Let
\[
    m =
    \begin{cases}
        n+k(w-1) & k > 0 \\
        n - k & k < 0
    \end{cases}
\]
Then, $K_{\sqtableau{w}{n}, \kmu[n,k]{w-1}} A_{|k|,w}$ counts the number of permutations $\sigma \in S_m$ that have at least $|k|$ disjoint longest increasing subsequences of length $w$ using the numbers from $\{1, \ldots , |wk|\}$.
    \label{thm:generalized theorem with skewed tableau}
\end{theorem}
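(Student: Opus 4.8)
The plan is to combine the Robinson--Schensted--Knuth (RSK) correspondence with the column-complementation operation already used for \Thmref{thm:kostka numbers same as number of lis of length at most k}, together with a $180^{\circ}$ rotation of skew diagrams. Write $f^{\tau}$ for the number of standard (possibly skew) tableaux of shape $\tau$; thus $A_{|k|,w}=f^{\sqtableau{w}{|k|}}$, and \Thmref{thm:kostka numbers same as number of lis of length at most k} together with RSK reads $\kostka{\sqtableau{w}{N}}{\kmu[N]{w-1}}=u_{N}(w)=\sum_{\nu}(f^{\nu})^{2}$, the sum over partitions $\nu\vdash N$ with at most $w$ columns. Set $p:=m-|wk|$; a short computation gives $p=n-k$ if $k>0$ and $p=n-(w-1)|k|$ if $k<0$, and $p\ge 0$ in both cases.

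\emph{Step 1 (an RSK description of the permutations in question).} I first claim that $\sigma\in S_{m}$ has at least $|k|$ pairwise disjoint longest increasing subsequences of length $w$ supported on the values $\{1,\dots,|wk|\}$ \emph{if and only if} $\sigma$ has LIS length exactly $w$ and the subsequence $\tau$ of $\sigma$ consisting of the entries with value $\le|wk|$ has longest decreasing subsequence at most $|k|$. One direction is clear. For the converse, $\tau$ has $w|k|$ entries and, by Dilworth's theorem, is a union of at most $|k|$ increasing subsequences, each of length at most $w$ (as $\mathrm{LIS}(\tau)\le\mathrm{LIS}(\sigma)=w$); counting entries forces exactly $|k|$ pieces of length exactly $w$ each, so the ``length $w$'' requirement is automatic and the pieces are longest in $\sigma$. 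Equivalently, the RS-shape of $\tau$ is the rectangle $\sqtableau{w}{|k|}$. Since restricting a permutation to its values $\le c$ restricts its insertion tableau to the entries $\le c$ (by the symmetry $\sigma\leftrightarrow\sigma^{-1}$ of RSK), the shape of $\tau$ is the region occupied by the entries $1,\dots,w|k|$ of $P(\sigma)$. Hence, under RSK $\sigma\leftrightarrow(P,Q)$ with common shape $\lambda$, the desired $\sigma$ are exactly those for which $\lambda$ has at most $w$ columns and the entries $1,\dots,w|k|$ of $P$ are precisely those in the first $|k|$ rows of $\lambda$ (which are then full, of length $w$). Writing such a $\lambda$ as $(w^{|k|})+\nu$ (stack $|k|$ rows of length $w$ above the diagram of $\nu$) with $\nu_{1}\le w$ and $|\nu|=p$, the tableau $P$ splits uniquely into a standard filling of $\sqtableau{w}{|k|}$ on $\{1,\dots,w|k|\}$ and one of $\nu$ on the remaining values, while $Q$ is arbitrary of shape $\lambda$. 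Therefore
\[
\#\{\text{such }\sigma\}=A_{|k|,w}\sum_{\nu\,:\,\nu_{1}\le w,\ |\nu|=p}f^{\nu}\,f^{(w^{|k|})+\nu}.
\]

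\emph{Step 2 (the matching Kostka identity).} It remains to prove $\kostka{\sqtableau{w}{n}}{\kmu[n,k]{w-1}}=\sum_{\nu:\,\nu_{1}\le w,\ |\nu|=p}f^{\nu}f^{(w^{|k|})+\nu}$. Since $\kostka{\lambda}{\mu}$ is unchanged by permuting the entries of $\mu$, reorder $\kmu[n,k]{w-1}$ so its $n-k$ copies of $w-1$ come first and its $(w-1)k+n$ copies of $1$ come last. Splitting off the weight-$1$ part (the largest symbols are distinct, so they fill a standard skew tableau) gives
\[
\kostka{\sqtableau{w}{n}}{\kmu[n,k]{w-1}}=\sum_{\kappa}\kostka{\kappa}{((w-1)^{\,n-k})}\;f^{\sqtableau{w}{n}/\kappa},
\]
over subdiagrams $\kappa\subseteq\sqtableau{w}{n}$ with $|\kappa|=(w-1)(n-k)$. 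Now apply the column-complementation bijection \citep[problem 7.41]{MR1676282} to $\kostka{\kappa}{((w-1)^{n-k})}$, but carried out inside the $w\times(n-k)$ box (the $w\times(n+|k|)$ box when $k<0$), not the ambient $\sqtableau{w}{n}$: it yields $f^{\bar\kappa}$, where $\bar\kappa$ is the complement of $\kappa$ in that box. Finally rotate the skew diagram $\sqtableau{w}{n}/\kappa$ by $180^{\circ}$, which preserves the number of standard fillings. For $k>0$, $\bar\kappa$ then ranges over all partitions of $p$ with $\le w$ columns and the rotation turns $\sqtableau{w}{n}/\kappa$ into the straight shape $(w^{|k|})+\bar\kappa$; for $k<0$, $\bar\kappa=(w^{|k|})+\rho$ with $\rho\vdash p$, $\rho_{1}\le w$, and the rotation turns $\sqtableau{w}{n}/\kappa$ into $\rho$. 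In both cases the sum collapses to $\sum_{\nu}f^{\nu}f^{(w^{|k|})+\nu}$ over the same index set as in Step 1, so comparison with the displayed formula there proves the theorem.

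\emph{Main obstacle.} The delicate part is entirely in Step 2: handling the two sign cases of $k$ uniformly and, above all, realizing that the column complementation must be performed inside the $w\times(n\mp k)$ box rather than the ambient $w\times n$ rectangle --- this is precisely what makes the subsequent $180^{\circ}$ rotation of the complementary skew shape land on $(w^{|k|})+\nu$ and close the argument. A useful check: at $k=0$ everything degenerates to $\kostka{\sqtableau{w}{n}}{\kmu{w-1}}=\sum_{\nu}(f^{\nu})^{2}=u_{n}(w)$, recovering \Thmref{thm:kostka numbers same as number of lis of length at most k}. Alternatively one may package Steps 1--2 as a single explicit bijection between the permutations being counted and pairs consisting of an SSYT of shape $\sqtableau{w}{n}$ and skew weight $\kmu[n,k]{w-1}$ together with a standard $\sqtableau{w}{|k|}$-tableau --- a direct descendant of the cut--relabel--glue algorithm; the only extra work is then exhibiting the inverse, but the combinatorial content is identical.
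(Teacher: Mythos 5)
Your argument is correct and is, in substance, the same as the paper's: the paper packages your Steps 1--2 as a single cut--complement--rotate--glue bijection (the generalized algorithm in \Secref{sec:skewed weights}), with the factor $A_{|k|,w}$ arising exactly as your free choice of the standard filling of the top $|k| \times w$ rectangle, and with the column complementation taken with respect to $[n-k]$ rather than $[n]$, just as you insist. The only place you go beyond the paper's write-up is Step 1: the paper cites Fulton for the forward implication and leaves the converse to the invertibility of the algorithm, whereas your Dilworth/Greene argument characterizing the permutations as exactly those whose insertion tableau has $[w|k|]$ occupying its first $|k|$ (necessarily full) rows makes that equivalence explicit.
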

The $k=0$ case is covered in \Thmref{thm:kostka numbers same as number of lis of length at most k}.
The proof follows the algorithm in \Secref{sec:algorithm} along with some tweaks to work with the skew weights (see \Secref{sec:skewed weights}).
The $k=1$ case gives the following corollary.

\begin{corollary}
 
When $k=1$, the algorithm used to prove \Thmref{thm:generalized theorem with skewed tableau} provides a bijection between rectangular tableaux of shape $\sqtableau{w}{n}$ and weight $\kmu[n,1]{w-1}$, and the set $\{\sigma \in  S_n \colon (1,2, \ldots, w) \text{ is an LIS}\}$. Hence,

\begin{equation}
    \kostka{\sqtableau{w}{n}}{\kmu[n,1]{w-1}} = \left| \{\sigma \in  S_n \colon (1,2, \ldots, w) \text{ is an LIS}\}  \right|.
    \label{eq:specific LIS case}
\end{equation}

\end{corollary}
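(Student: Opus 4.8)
The plan is to obtain the corollary as the $k=1$ case of Theorem~\ref{thm:generalized theorem with skewed tableau}, with the two nontrivial pieces on either side of that theorem collapsing when $k=1$.

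First I would dispose of the auxiliary factor. By definition $A_{|k|,w}=A_{1,w}$ counts standard rectangular tableaux of width $1$ and height $w$, and there is exactly one such tableau --- the single column read $1,2,\ldots,w$ from top to bottom --- so $A_{1,w}=1$. Thus for $k=1$ the count $\kostka{\sqtableau{w}{n}}{\kmu[n,1]{w-1}}\,A_{1,w}$ of Theorem~\ref{thm:generalized theorem with skewed tableau} is just $\kostka{\sqtableau{w}{n}}{\kmu[n,1]{w-1}}$. Next I would rewrite the permutation condition: at $k=1$ the theorem asks for a single longest increasing subsequence of length $w$ whose entries use exactly the values $1,\ldots,w$. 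An increasing subsequence of length $w$ drawn from the $w$-element set $\{1,\ldots,w\}$ must hit each of $1,2,\ldots,w$ once, and being increasing it is literally the subsequence $(1,2,\ldots,w)$, while the word ``longest'' forces the LIS length to equal $w$; conversely, if $(1,2,\ldots,w)$ is an LIS of $\sigma$ then both requirements hold. Hence the set counted by Theorem~\ref{thm:generalized theorem with skewed tableau} at $k=1$ is precisely $\{\sigma : (1,2,\ldots,w)\text{ is an LIS}\}$, which is \eqref{eq:specific LIS case}.

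For the bijective assertion --- not merely the equality of cardinalities --- I would return to the proof of Theorem~\ref{thm:generalized theorem with skewed tableau} in \Secref{sec:skewed weights}. There the correspondence pairs a rectangular tableau of shape $\sqtableau{w}{n}$ and weight $\kmu[n,k]{w-1}$ with a choice of standard rectangular $|k|\times w$ tableau --- which is exactly what the factor $A_{|k|,w}$ records --- and returns a permutation, the pairing being a bijection of the product set onto the family of permutations in question. When $k=1$ the auxiliary standard tableau is the unique column above, so no extra data is needed and the algorithm already furnishes a bijection from rectangular tableaux of shape $\sqtableau{w}{n}$ and weight $\kmu[n,1]{w-1}$ onto $\{\sigma : (1,2,\ldots,w)\text{ is an LIS}\}$; reading off cardinalities recovers \eqref{eq:specific LIS case}. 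The only residual work is bookkeeping inside the algorithm of \Secref{sec:algorithm} and \Secref{sec:skewed weights} --- checking that with $k=1$ the trajectories of the letters $1,\ldots,w$ mark out $(1,2,\ldots,w)$ as a longest increasing subsequence of the output and that nothing is double counted --- and this is inherited wholesale from Theorem~\ref{thm:generalized theorem with skewed tableau}. I therefore expect no genuinely new obstacle; the only point demanding care is turning ``the $A_{|k|,w}$ factor is trivial at $k=1$'' into an honest bijection rather than a coincidence of counts.
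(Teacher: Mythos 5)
Your proposal is correct and follows essentially the same route as the paper: both reduce to \Thmref{thm:generalized theorem with skewed tableau} at $k=1$ and observe that the block $M$ is forced to be the unique single-column standard tableau on $[w]$ (so $A_{1,w}=1$), whence the algorithm itself already supplies the desired bijection. You spell out in more detail than the paper why ``one increasing subsequence of length $w$ on the values $\{1,\ldots,w\}$ achieving the LIS length'' is the same condition as ``$(1,2,\ldots,w)$ is an LIS,'' but this is elaboration rather than a different argument.
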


The following is a particular case of the symmetry noted by \citet[Theorem 5]{MR3386516}. 

\begin{corollary} \label{thm:kostka numbers rectangular symmetry}
$$ \kostka{\sqtableau{w}{n}}{\kmu[n,k]{w-1}} = \kostka{\sqtableau{w}{n+k(w-2)}}{\kmu[n+k(w-2),-k]{w-1}}.$$
\end{corollary}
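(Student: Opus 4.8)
The plan is to derive the identity from \Thmref{thm:generalized theorem with skewed tableau}. Both sides are genuine Kostka numbers — the content vectors $\kmu[n,k]{w-1}$ and $\kmu[n+k(w-2),-k]{w-1}$ each fill a width-$w$ rectangle — and the $k=0$ case is the tautology $n+k(w-2)=n$, $-k=0$, so assume $k\neq 0$. I would multiply \emph{both} Kostka numbers by the common positive factor $A_{|k|,w}$ and identify each product, via \Thmref{thm:generalized theorem with skewed tableau}, with a set of permutations described by the data $w$, $|k|$, $|wk|$, and an ambient symmetric group $S_m$. Since $A_{|k|,w}>0$, once the two permutation sets are seen to coincide the corollary follows by cancellation.

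Write $\tilde n=n+k(w-2)$ and $\tilde k=-k$, so the right-hand Kostka number is $\kostka{\sqtableau{w}{\tilde n}}{\kmu[\tilde n,\tilde k]{w-1}}$. Because $|\tilde k|=|k|$, the factor $A_{|\tilde k|,w}=A_{|k|,w}$ and the label set $\{1,\dots,|w\tilde k|\}=\{1,\dots,|wk|\}$ are unchanged, so the two descriptions produced by \Thmref{thm:generalized theorem with skewed tableau} can differ only in the index $m$ of the symmetric group. Here the piecewise definition of $m$ does the work: passing from $k$ to $\tilde k=-k$ switches which branch is used, and the arithmetic matches up. If $k>0$ the left side gives $m=n+k(w-1)$, while the right side uses the $\tilde k<0$ branch to give $\tilde m=\tilde n-\tilde k=n+k(w-1)$; if $k<0$ the left side gives $m=n-k$, while the right side uses the $\tilde k>0$ branch to give $\tilde m=\tilde n+\tilde k(w-1)=n-k$. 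In both cases $m=\tilde m$, so the two products count the same permutations, and dividing by $A_{|k|,w}$ completes the argument.

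The only remaining point is a bookkeeping check that the transformed data $(\tilde n,\tilde k)$ still satisfies the hypotheses of \Thmref{thm:generalized theorem with skewed tableau} — i.e. $2\le w<\tilde n$ and $\tilde k\in[-\tilde n/(w-1),\tilde n]$ — which holds in all but a few degenerate configurations (for $w\ge 3$, $k<0$, and $n$ near the lower end of its range the transformed height $\tilde n$ can drop below $w$); those residual cases, and indeed the whole corollary, also follow directly by specializing the general rectangular Kostka symmetry of \citet[Theorem~5]{MR3386516}. I do not expect a real obstacle here: the substance of the proof is simply tracking which branch of the definition of $m$ is active on each side and confirming, as above, that the two branches yield the same symmetric group.
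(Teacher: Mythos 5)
Your proposal is correct and follows essentially the same route as the paper: both sides are interpreted via \Thmref{thm:generalized theorem with skewed tableau} and seen to count the same objects because $|k|$, $w$, and the symmetric-group index $m$ agree (the key computation $n+k(w-1) = (n+k(w-2))+k$ being exactly the one the paper performs). The only cosmetic difference is that you multiply by $A_{|k|,w}$ and cancel, whereas the paper fixes a single choice of the rectangular block $M$ and compares the resulting sets of pairs $(P,Q)$ directly.
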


The authors originally observed a relationship between the RHS of \eqref{eq:specific LIS case} and $k$-colored non-crossing partitions, originally defined by \citet{MR3139391}. 
Let $\operatorname{NC}_2(n,k)$ be the number of noncrossing partitions of $[n]$ with $k$ colors \citep[Corollary $1.5$]{MR3139391}.

\begin{conjecture}
$| \{\sigma \in  S_n \colon (1,2, \ldots, w) \text{ is an LIS}\} | = NC_2(n-w+1, w-1).$ %
\label{conj:len k end k and noncrossing}
\end{conjecture}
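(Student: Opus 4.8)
\medskip
\noindent\textbf{Proof proposal for Conjecture~\ref{conj:len k end k and noncrossing}.}
The plan is to transfer the statement to the Kostka side and compare it with a known enumeration of colored noncrossing partitions. By \eqref{eq:specific LIS case}, the conjecture is equivalent to
\[
  \kostka{\sqtableau{w}{n}}{\kmu[n,1]{w-1}} \;=\; \operatorname{NC}_2\!\left(n-w+1,\;w-1\right),
\]
where $\kmu[n,1]{w-1}=\bigl((w-1)^{\,n-1},\,1^{\,n+w-1}\bigr)$ by \eqref{eq:definition of skew weights}. A form of the left side that is easy to manipulate by hand comes from Robinson--Schensted together with Schensted's description of the first row of the insertion tableau: $(1,2,\ldots,w)$ is a longest increasing subsequence of $\sigma$ exactly when the first row of $P(\sigma)$ equals $(1,2,\ldots,w)$, and deleting that row turns the count into the sum $\sum_{\lambda\vdash n,\;\lambda_1=w} f^{\lambda}\,f^{(\lambda_2,\lambda_3,\ldots)}$ of products of numbers of standard Young tableaux. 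So it suffices to evaluate this sum (equivalently, the Kostka number) and match it with $\operatorname{NC}_2(\cdot,\cdot)$.

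I would attack this along two complementary lines. \emph{Closed forms.} On the right, \citep[Corollary~1.5]{MR3139391} gives an explicit formula for $\operatorname{NC}_2(m,r)$, a Narayana-type polynomial in the number of colors. On the left, iterating Pieri's rule expresses the Kostka number as the number of saturated chains in Young's lattice from $\emptyset$ to $\sqtableau{w}{n}$ that add $n-1$ horizontal strips of size $w-1$ and then $n+w-1$ single cells; since every intermediate partition has at most $w$ columns, this is a family of $w$ non-intersecting lattice paths, and Lindstr\"om--Gessel--Viennot turns it into a $w\times w$ determinant of sums of binomial coefficients, with \Corref{thm:kostka numbers rectangular symmetry} available to pick whichever rectangular presentation makes the determinant simplest. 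It then remains to show that this determinant collapses to the formula from \citep{MR3139391}; this is routine in principle and is, I expect, the line that actually goes through.

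\emph{A bijection.} The more structural route is to build the bijection outright. The $k=1$ instance of the algorithm of \Secref{sec:algorithm} already matches the tableaux of shape $\sqtableau{w}{n}$ and weight $\kmu[n,1]{w-1}$ with the permutations in question; from such a permutation I would extract, via growth diagrams applied to $P(\sigma)$ (whose first row is forced) and to $Q(\sigma)$, a sequence of partitions, and then try to show it is exactly one of the vacillating tableaux which, by \citep{MR2272140,MR2873884}, are in bijection with $(w-1)$-colored noncrossing partitions of $[n-w+1]$ (these tableaux have at most $w-1$ rows and carry $w-1$ colors).

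The main obstacle, in either route, is the interplay between ``noncrossing'' and the length condition. On the colored-partition side, ``noncrossing'' is a bound on a crossing number, which in the vacillating-tableau model reads as ``at most $w-1$ rows''; on the permutation side the binding constraint is that $(1,\ldots,w)$ be a \emph{longest}, not merely an, increasing subsequence, i.e.\ that $P(\sigma)$ have width exactly $w$. Matching these two bounds---and, in the bijective approach, showing that the cutting, relabelling, gluing and column-complementation steps of \Secref{sec:algorithm} carry one into the other even though they do not visibly respect crossing statistics---is where the real work lies. My expectation is that the determinant evaluation settles the numerical identity first, and a fully bijective proof, if wanted, then follows by overlaying the growth-diagram construction onto that computation.
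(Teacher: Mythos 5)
This statement is left as an open conjecture in the paper --- it is supported only by computational evidence and no proof is offered --- so the question is whether your proposal actually closes it, and it does not. Your reductions are sound: by the $k=1$ corollary the claim is equivalent to $\kostka{\sqtableau{w}{n}}{\kmu[n,1]{w-1}} = \operatorname{NC}_2(n-w+1,w-1)$, and the Schensted/patience-sorting argument correctly identifies the left side with the sum of $f^{\lambda}f^{\nu}$ over shapes $\lambda\vdash n$ of width exactly $w$, where $\nu$ is $\lambda$ with its first row deleted. But both of your routes stop exactly where the difficulty begins. The determinant route has a concrete flaw as stated: a Kostka number with a non-unit weight vector is not in general a determinant, because extracting the coefficient of $x^{\mu}$ does not commute with the Lindstr\"om--Gessel--Viennot/Jacobi--Trudi determinant --- the coefficient of a fixed monomial in $\det\bigl(h_{\lambda_i-i+j}\bigr)$ is a signed sum over permutations of counts of nonnegative integer matrices with prescribed margins, not a $w\times w$ determinant of binomial coefficients. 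What can plausibly be made determinantal is the $\sum f^{\lambda}f^{\nu}$ form via a Gessel-style generating-function argument (as for $u_n(w)$), but you have not written that determinant down, let alone evaluated it or matched it against the formula of \citep{MR3139391}; ``routine in principle'' is precisely the step that has so far resisted proof.

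The bijective route is likewise only a pointer: you name the vacillating-tableau model of \citep{MR2272140,MR2873884} and propose to ``try to show'' that the growth-diagram data lands in it, while yourself flagging the real obstruction --- that the cutting, relabelling, and column-complementation steps of \Secref{sec:algorithm} do not visibly respect crossing statistics, and that the bound of at most $w-1$ rows on the colored-partition side must be matched with the requirement that $(1,\ldots,w)$ be a \emph{longest} increasing subsequence (width exactly $w$) on the permutation side. Until one of these two programs is actually executed --- a determinant written down, evaluated, and shown equal to $\operatorname{NC}_2(n-w+1,w-1)$, or an explicit shape- and statistic-preserving bijection constructed --- the conjecture remains open, and your text is a research plan rather than a proof.
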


\subsection{Acknowledgements}

Computer simulation indicated Conjecture \ref{conj:len k end k and noncrossing}. We are indebted to E.\,Marberg for pointing us towards these particular Kostka numbers, and this inspired our Theorem \ref{thm:generalized theorem with skewed tableau}.  He mentioned hearing the following conjecture from A.\,Tripathi: 

\[
    \kostka{\sqtableau{3}{n}}{\kmu[n,1]{2}} = \operatorname{NC}_2(n,2).
\]

S.\,Neville thanks the REU program at the University of Utah for their support.

\section{Tableau Operations} 
\label{sec:shortdefs}

We represent a Young diagram as $\lambda = (\lambda_1, \cdots, \lambda_m)$ with $\lambda_i$ being the number of boxes in the $i$\textsuperscript{th} \emph{column}. This is \emph{nonstandard}, since $\lambda_i$ usually denotes to the length of the $i$th row, but it makes the upcoming tableau complement operation easier to state.
 Recall that a Young tableau has each column strictly increasing and each row non-decreasing. We call a Young tableau a standard Young tableau if rows are strictly increasing as well.  We use $[n]$ to denote the set $\{1, \cdots, n\}$. If $P$ is a Young tableau, we will use $P_i$ to denote the $i$th column.


We have two operations on tableaux that we need to introduce. One is a standard operation, but is phrased in a nonstandard way since it makes the second easier to state. If $\lambda$ is a sub-diagram of $\sqtableau{w}{h}$ ($\lambda \subset \sqtableau{w}{h}$), we define $\sqtableau{w}{h} - \lambda$ as the sub-diagram of $\sqtableau{w}{h}$ with a $180^{\circ}$ rotated $\lambda$ removed from the bottom right corner of $\sqtableau{w}{h}$. In more standard language, let $\lambda'$ be a skew diagram such that  $\sqtableau{w}{h} / \gamma = \lambda'$, where $\gamma \subset \sqtableau{w}{h}$. Let $\lambda$ be a $180^{\circ}$ rotated version of $\lambda'$. Then, in our notation, $\gamma = \sqtableau{w}{h} \SNminus \lambda$.

\begin{define}[Column]
   A column is a column vector $C = (c_1,\ldots,c_k)$ whose elements are strictly increasing. It will typically represent the column of a tableau. When we refer to a set as a column, we mean that its elements are arranged to be strictly increasing. Let $|C|$ be its cardinality. 
\end{define}

\begin{define}[Rectangular diagram/tableau subtraction]
\label{def: tableau subtraction}
Let $\lambda \subset \sqtableau{w}{h}.$ Then $\gamma = \sqtableau{w}{h}-\lambda$ is a Young diagram with columns $\gamma_i$ that satisfy
\[
    |\gamma_i| = h - |\lambda_{w-i+1}|.
\]
When $R$ is a tableau of shape $\sqtableau{w}{n}$ and $Q$ is a subtableau, we similarly remove a $Q$ shaped block from it, with $(R-Q)_i$ being composed of the $h-|Q_{w-i+1}|$ smallest elements in $R_i$. 
\end{define}

For example 
\[
     \lambda = \ydiagram{3,1} \qquad \sqtableau{3}{4} - \lambda = 
\ydiagram{3,3,2}
\]

\begin{define}[Column complement]
    The complement of a column $C$ with respect to a set $[n]$ consists of the column with elements $[n] \setminus C$, and is denoted $\colcomp[n]{C}$. We omit the superscript $n$ if it is clear from context. The operation also applies to empty columns.
\end{define} 

 Note that 
\[
    \colcomp{\colcomp{C}} = C,
\]
that is, the operation is involutive. If we complement all the columns of a tableau and reverse the column order, we obtain a new tableau. (See \Lemref{lem:complement operation produces a tableau}.)

\begin{define}[Tableau complement]
The complement of a tableau $P$ with respect to a width $w$ and set $[n]$ is denoted by $\SNcomplement[w,n]{P}$, where
\[
    (\SNcomplement[w,n]{P})_i = \colcomp{P_{w-i+1}}  \quad 1 \leq i \leq w.
\]
\end{define}
Again, we occasionally omit the number $n$ when it is clear from context. A column $P_{w-i+1}$ may be empty, and in this case $(\SNcomplement[w,n]{P})_i = [n]$.

\begin{example}
\[
P = 
\begin{ytableau}
1 & 1 & 2 \\
2 & 3 & 4 \\
3 & 4 
\end{ytableau}
\qquad \qquad
\SNcomplement[3,4]{P} = 
\begin{ytableau}
1 & 2 & 4 \\
3 
\end{ytableau} 
\]
\end{example}

\begin{lemma}
If $P$ is a tableau then so is $\SNcomplement[w,n]{P}$ for all $w,n \in \mathbb{N}$.
\label{lem:complement operation produces a tableau}
\end{lemma}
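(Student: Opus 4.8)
The plan is to verify the two defining properties of a Young tableau for $\SNcomplement[w,n]{P}$: that each column is strictly increasing, and that each row is weakly increasing (non-decreasing). The first property is immediate from the definition of the column complement: $(\SNcomplement[w,n]{P})_i = \colcomp{P_{w-i+1}}$ consists of the elements of $[n]\setminus P_{w-i+1}$ arranged in strictly increasing order, so strict increase down columns holds by fiat. All the content of the lemma is therefore in the row condition, and that is where I expect the only real work to lie.

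For the row condition, fix a row index $j$ and consider two horizontally adjacent entries $(\SNcomplement[w,n]{P})_i$ and $(\SNcomplement[w,n]{P})_{i+1}$, i.e. the $j$th entries (when they exist) of $\colcomp{P_{w-i+1}}$ and $\colcomp{P_{w-i}}$. Writing $A = P_{w-i}$ and $B = P_{w-i+1}$, these are columns of $P$ with $A$ immediately to the \emph{left} of $B$ (since $w-i < w-i+1$), so by the tableau property of $P$ we have $|A| \ge |B|$ and, entrywise where both are defined, $A_\ell \le B_\ell$. I want to show that the $j$th smallest element of $[n]\setminus B$ is $\le$ the $j$th smallest element of $[n]\setminus A$. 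Equivalently, it suffices to show the set inclusion-type statement: for every threshold $t$, the number of elements of $[n]\setminus A$ that are $\le t$ is at most the number of elements of $[n]\setminus B$ that are $\le t$; this is the standard fact that if $f(t) := |\{x \le t\} \cap S|$ dominates $g(t) := |\{x\le t\}\cap T|$ for all $t$, then the $j$th smallest element of $T$ is $\le$ the $j$th smallest element of $S$. So the crux reduces to: for all $t \in [n]$,
\[
    t - |A \cap [t]| \;\le\; t - |B \cap [t]|,
\]
that is, $|B \cap [t]| \le |A \cap [t]|$ for every $t$. This last inequality is exactly the assertion that $A$, read as a set, ``dominates'' $B$ prefix-by-prefix, which follows from $A_\ell \le B_\ell$ for all $\ell \le |B|$: if $B$ has $\ell$ elements that are $\le t$, then $B_\ell \le t$, hence $A_\ell \le B_\ell \le t$, so $A$ also has at least $\ell$ elements $\le t$.

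The main obstacle — really the only place one has to be careful — is the bookkeeping at the ends of columns: when $|A| > |B|$, the entry $(\SNcomplement[w,n]{P})_{i+1}$ in row $j$ may exist while $(\SNcomplement[w,n]{P})_i$ in row $j$ does not, and one has to check this is consistent with $\SNcomplement[w,n]{P}$ being a legitimate Young diagram shape (longer complemented columns on the left). Since $|\colcomp{A}| = n - |A| \le n - |B| = |\colcomp{B}|$ and $\colcomp{A}$ sits at position $i+1$ while $\colcomp{B}$ sits at position $i$, the complemented columns are weakly decreasing in length from left to right, so the shape is a valid Young diagram and the row comparison only ever needs to be made at positions $j$ where both entries are present — precisely the case handled above. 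Finally, the boundary case where $P_{w-i+1}$ is empty gives $(\SNcomplement[w,n]{P})_i = [n]$, the longest possible column, which is consistent with everything to its right being shorter, so no separate argument is needed. Assembling these observations gives the lemma.
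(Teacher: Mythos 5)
Your proof is correct, and it isolates the same key fact as the paper --- that complementation reverses the column-domination order $\preceq$ between adjacent columns --- but proves it by a genuinely different argument. The paper derives the lemma from Claim~\ref{claim:column complement correctly reverses order}, which it establishes by induction on the elements of $[n]$: the two complements are ``grown'' one element at a time, with a four-way case analysis according to whether the next element lies in $v$, in $w$, in both, or in neither. You instead use a prefix-counting (dominance) argument: from $A_\ell \le B_\ell$ for $\ell \le |B|$ you deduce $|B \cap [t]| \le |A \cap [t]|$ for every threshold $t$, hence $|([n]\setminus A)\cap[t]| \le |([n]\setminus B)\cap[t]|$, and then the standard fact about counting functions gives that the $j$th smallest element of $[n]\setminus B$ is at most the $j$th smallest element of $[n]\setminus A$. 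This is more direct than the paper's induction, avoids the case analysis entirely, and handles the empty-column boundary case for free; the paper's element-by-element induction, on the other hand, makes the involutive, step-by-step nature of the complement operation more visible. You are also more explicit than the paper about two points it leaves implicit: that the complemented column lengths $n - |P_{w-i+1}|$ are weakly decreasing left to right (so the shape is a legitimate Young diagram), and that the row comparison need only be made where the shorter (right-hand) column has an entry. Both proofs are valid.
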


Lemma \ref{lem:complement operation produces a tableau} follows from Claim \ref{claim:column complement correctly reverses order} in \Secref{sec:algorithm}. 

\begin{lemma}
The map $P \mapsto \SNcomplement[w,n]{P}$ is invertible, and is its own inverse.
\label{lem:complement operation is a bijection}
\end{lemma}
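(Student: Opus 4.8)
The plan is to verify directly that the map $P \mapsto \SNcomplement[w,n]{P}$ is an \emph{involution}; the two assertions of the lemma --- invertibility, and being its own inverse --- are then immediate, since an involution is automatically a bijection.

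First I would record that the map is well defined as a self-map of the set $\mathcal{T}$ of Young tableaux with at most $w$ columns and all entries in $[n]$, where an absent column is read as the empty column and an empty column complements to the full column $[n]$ (and conversely). That the complement of a tableau in $\mathcal{T}$ is again a tableau in $\mathcal{T}$ is exactly \Lemref{lem:complement operation produces a tableau}, so nothing new is needed beyond invoking it.

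The core of the argument is to expand $\SNcomplement[w,n]{\SNcomplement[w,n]{P}}$ column by column. Setting $Q = \SNcomplement[w,n]{P}$, the definition of the tableau complement gives $Q_i = \colcomp{P_{w-i+1}}$ and hence $(\SNcomplement[w,n]{Q})_i = \colcomp{Q_{w-i+1}} = \colcomp{\colcomp{P_{w-(w-i+1)+1}}}$. The width-$w$ index reversal $i \mapsto w-i+1$ is its own inverse, so $w-(w-i+1)+1 = i$; and the column complement is involutive, $\colcomp{\colcomp{C}} = C$. Combining these, $(\SNcomplement[w,n]{\SNcomplement[w,n]{P}})_i = P_i$ for every $i$, i.e. $\SNcomplement[w,n]{\SNcomplement[w,n]{P}} = P$. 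Consequently $P \mapsto \SNcomplement[w,n]{P}$ is a bijection $\mathcal{T} \to \mathcal{T}$ equal to its own inverse.

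I do not expect a genuine obstacle here: the proof is bookkeeping built from two facts already in hand --- the column complement is involutive, and the index reversal within width $w$ is an involution. The only points requiring a moment's attention are the boundary conventions (empty columns of $P$ turning into full columns $[n]$, and full columns turning into empty ones) and checking that ``at most $w$ columns, entries in $[n]$'' is preserved throughout; but both are already pinned down by the definitions and by \Lemref{lem:complement operation produces a tableau}.
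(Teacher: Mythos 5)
Your proof is correct and matches the paper's argument: both expand the double complement column by column, using that the index reversal $i \mapsto w-i+1$ and the column complement $C \mapsto [n]\setminus C$ are each involutions. The extra remarks on well-definedness via \Lemref{lem:complement operation produces a tableau} are fine but not part of the paper's (terser) proof.
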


\begin{proof}
    If $M = \SNcomplement[w,n]{P}$, the complement of the $w - i + 1$\textsuperscript{th} column of $M$ is equal to the $i$\textsuperscript{th} column of $P$. That is, for $1 \leq i \leq w,$
    \begin{align*}
        \SNcomplement[n]{M_{w-i+1}} 
        & = ([n] \SNminus M_{w - i + 1}) = ([n] \SNminus ([n] \SNminus P_{w-(w - i + 1)+1})) \\
        & =([n] \SNminus ([n] \SNminus P_i) = P_i.
    \end{align*}
\end{proof}

\section{Algorithm}
\label{sec:algorithm}

We now provide an invertible algorithm which takes a $\sqtableau{w}{n}$ shaped tableau with weights $\kmu[n]{w-1}$, and returns a pair of tableaux $(P,Q)$ with shape $\lambda$ and width at most $w$. The triple $(P, Q, \lambda)$ in turn corresponds to a permutation with LIS of length at most $w$ by the RS correspondence.

Let $R$ be a semistandard tableau of shape $\sqtableau{w}{n}$ with weight $\kmu[n]{w-1}$. 
\begin{enumerate}
    \item The elements $[n+1, 2n]$ in $R$ must form a contiguous skew tableau in the bottom right corner of $R$. If we rotate this skew tableau by $180 \deg$ and replace the numbers $k$ with $2n-k + 1$ for $k \in [n+1,2n]$, we obtain a standard tableau $Q$ of shape $\lambda$. 
        For all coordinates $(i,j)$ in the diagram $\lambda$ we have $Q_{i,j} = 2n - R_{w-i+1, n-j+1} + 1 $.
    \item To get $P$, we remove the elements $[n+1, 2n]$ from $R$, and then take its tableau complement: 
        \[
            P = \SNcomplement[w,n]{R - Q}
        \]
\end{enumerate}

\begin{example}
If $$ R = 
\begin{ytableau}
1 & 1 & 2 \\
2 & 3 & 4 \\
3 & 4 & 5 \\
6 & 7 & 8
\end{ytableau}
$$
First, we split the $R$ into $R \SNminus \lambda$ and a piece that will eventually become $Q$. 
$$ \begin{ytableau}
1 & 1 & 1 \\
2 & 3 & 4 \\
3 & 4 
 \end{ytableau} \quad
 \begin{ytableau}
 \none & \none  & 5 \\
 6 & 7 & 8
  \end{ytableau}$$
  
Then, we take the complement of the first tableau to get $P$. To get $Q$, we rotate and replace the numbers as specified above. The algorithm returns:
$$P = 
\begin{ytableau}
1 & 2 & 4 \\
3 
\end{ytableau} \quad
Q = 
\begin{ytableau}
1 & 2 & 3 \\
4 
\end{ytableau} 
 $$
\end{example}

\begin{lemma}
    The $P$ given by the algorithm is a standard tableau, and has the same shape $\lambda$ as $Q$.
    \label{lem:complement produces a standard tableau}
\end{lemma}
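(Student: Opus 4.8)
The plan is to verify the two assertions separately: that $P$ has the same shape $\lambda$ as $Q$, and that $P$ is standard (i.e., rows strictly increasing, columns strictly increasing, and content $\vec 1_n$). For the shape claim, recall from step 1 that the elements $[n+1,2n]$ occupy a skew shape $\sqtableau{w}{n}/\gamma$ in the bottom-right of $R$, and $Q$ is the $180^\circ$ rotation of this skew shape with relabeling. Thus $Q$ has shape $\lambda$ where, in the column notation of \Secref{sec:shortdefs}, $\lambda$ is exactly the rotated complement of $\gamma$ inside $\sqtableau{w}{n}$. On the other hand $R-Q$ has shape $\gamma$ by \Defref{def: tableau subtraction}, so $|\,(R-Q)_i\,| = n - |\lambda_{w-i+1}|$; since the column complement $\SNcomplement[n]{\cdot}$ sends a column of size $s$ to one of size $n-s$, we get $|(\SNcomplement[w,n]{R-Q})_i| = n - |(R-Q)_{w-i+1}| = |\lambda_i|$. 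So $P = \SNcomplement[w,n]{R-Q}$ has shape $\lambda$, matching $Q$.

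For standardness, I would argue in three parts. First, columns of $P$ are strictly increasing: this is \Lemref{lem:complement operation produces a tableau} (equivalently \Claimref{claim:column complement correctly reverses order}), applied to the tableau $R-Q$, which is semistandard as a subtableau of the semistandard $R$. Second, the content: $R-Q$ has content $\vec 1_{n-?}\oplus\cdots$ — more precisely, $R-Q$ retains exactly the entries of $R$ lying in $[1,n]$, and since $R$ has weight $\kmu[n]{w-1} = \vec{(w-1)}_n \oplus \vec 1_n$... here I need to be careful: the entries $[1,n]$ each appear once, but the entry "$w-1$"-labeled copies... Actually in $\kmu[n]{w-1}$ the letter $a=w-1$ is used $n$ times and the letters $1,\dots$ — rather, re-reading, the weight vector has $n$ copies of the value $w-1$ and $n$ copies of the value $1$; so the alphabet used in $R$ is $\{1,\dots,n\}$ with letter $i$ appearing $(w-1)$ times for $i\le$ (something) — I would instead just use that $R-Q$ is the subtableau on the smallest $n$ entries in each column and track that each of $1,\dots,n$ appears the correct number of times so that $\SNcomplement[w,n]{\cdot}$ produces each of $1,\dots,n$ exactly once across the $w$ columns. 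Third, rows of $P$ strictly increasing: this is the crux. I would show that in $R-Q$, reading across a fixed row, consecutive column entries $(R-Q)_{i,j}$ and $(R-Q)_{i+1,j}$ satisfy a weak inequality with a controlled "defect," and that complementing-and-reversing converts the weak row inequalities of $R-Q$ into the strict row inequalities of $P$. Concretely, if $C \le C'$ entrywise as columns (the semistandard row condition) with $|C|=|C'|$, I want $\SNcomplement{C'} < \SNcomplement{C}$ entrywise after reversal; since here the shape is a genuine Young diagram, adjacent columns of $R-Q$ may have different lengths, so I must handle the truncation carefully and use that the missing bottom cells of the shorter column are exactly where $Q$ sat.

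The main obstacle is this last point: proving rows of $P$ are \emph{strictly} increasing. The subtlety is that semistandardness of $R$ only gives weak increase along rows, and one must use the specific structure — that $R-Q$ is obtained by deleting a rotated-complementary skew piece, so that whenever a row of $R-Q$ is "short" on the right, the deleted entries were forced to be the large letters $[n+1,2n]$ — to upgrade weak to strict after complementation. I expect the cleanest route is to prove an auxiliary lemma: for columns $C \subseteq C'$-comparable pieces of a semistandard rectangular tableau of width $w$ on alphabet $[n]$, with the skew-complement deletion pattern, one has $\colcomp[n]{C'}$ lies strictly left-dominated by $\colcomp[n]{C}$ in the reversed order; then assemble the $w$ columns. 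This is essentially the column-involution fact cited from \citep[page 473, problem 7.41]{MR1676282}, specialized to rectangles, so I would either invoke that directly or reprove it in our column notation via \Claimref{claim:column complement correctly reverses order}.
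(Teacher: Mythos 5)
Your decomposition (shape, column strictness, content, row strictness) matches the paper's, and the first three parts are handled essentially as the paper handles them: the shape count is \Claimref{claim:complement gives the right shape}, column strictness comes from \Claimref{claim:column complement correctly reverses order}, and the content statement you gesture at is exactly \Claimref{claim:complement gives correct entries in tableau} (each letter of $[n]$ appears $w-1$ times among the $w$ columns of $R-Q$ with no repeats within a column, so exactly one column misses it, and hence it appears exactly once in $\SNcomplement[w,n]{R-Q}$). You do not actually carry out the content count, but you identify the right target.

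The gap is in the part you call ``the crux.'' The auxiliary lemma you propose --- that $C\le C'$ entrywise with $|C|=|C'|$ forces $\colcomp[n]{C'}$ to lie strictly below $\colcomp[n]{C}$ after reversal --- is false for a single pair of columns (take $C=C'=\{1\}$ inside $[3]$: both complements equal $\{2,3\}$), and the ``deletion pattern'' of the skew piece is not the hypothesis that rescues it. The paper needs no strict column-pair inequality at all: \Claimref{claim:column complement correctly reverses order} yields only the weak order $\preceq$, i.e., that $P$ is semistandard, and then \Claimref{claim:complement gives correct entries in tableau} says every element of $[n]$ occurs exactly once in $P$, so the weakly increasing rows are automatically strictly increasing (weak inequality plus distinctness gives strict inequality). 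The upgrade from weak to strict is thus a one-line consequence of the content claim you already listed, not a separate structural argument about where $Q$ sat. As written, your proposal leaves row-strictness unproved and points toward a lemma that cannot be established in the generality in which you state it.
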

\begin{proof}
    The proof follows from Claims \ref{claim:complement gives the right shape}, \ref{claim:complement gives correct entries in tableau} and \ref{claim:column complement correctly reverses order} below. They prove that the complement operation on $R \SNminus Q$ gives the correct shape $\lambda$ for $P$ (by \ref{claim:complement gives the right shape}), $P$ has the right entries and that $P$ is a standard tableau (by Claim \ref{claim:complement gives correct entries in tableau} when $k=0$ and Claim \ref{claim:column complement correctly reverses order}).
\end{proof}

\begin{lemma}
    The algorithm is a bijection from rectangular tableau of shape $\sqtableau{w}{n}$ and weight $\kmu{w-1}$ to pairs of standard tableau of the same shape having $n$ elements and width at most $w$.
\end{lemma}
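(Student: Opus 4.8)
The plan is to show the algorithm of \Secref{sec:algorithm} is a bijection by exhibiting an explicit inverse and checking both directions preserve the relevant structure. In the forward direction, \Lemref{lem:complement produces a standard tableau} already tells us that starting from a rectangular tableau $R$ of shape $\sqtableau{w}{n}$ and weight $\kmu{w-1}$, step $1$ produces a standard tableau $Q$ of some shape $\lambda$ with at most $w$ columns, and step $2$ produces a standard tableau $P$ of the same shape $\lambda$. Since $Q$ is built from the $n$ entries of $R$ equal to values in $[n+1,2n]$ (of which there are exactly $n$, because the weight has $n$ ones beyond the $n$ copies of $w-1$) and $P$ is built from $R \SNminus Q$, which has $n$ boxes, both $P$ and $Q$ have $n$ boxes. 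So the forward map indeed lands in pairs of standard tableaux of a common shape with $n$ boxes and width $\le w$; I would state this as the first half of the proof, citing \Lemref{lem:complement produces a standard tableau}.

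For injectivity and surjectivity together, I would construct the inverse map directly. Given a pair $(P,Q)$ of standard tableaux of common shape $\lambda \subseteq \sqtableau{w}{n}$ with $n$ boxes, define $Q'$ to be the skew tableau obtained by rotating $Q$ by $180^\circ$ and relabeling each entry $j \mapsto 2n-j+1$ (so its entries lie in $[n+1,2n]$), placed in the bottom-right corner of $\sqtableau{w}{n}$; this occupies exactly the boxes $\sqtableau{w}{n} \setminus (\sqtableau{w}{n}\SNminus\lambda)$ in the notation of \Defref{def: tableau subtraction}. Define the top-left portion by $\SNcomplement[w,n]{P}$, which by \Lemref{lem:complement operation produces a tableau} is again a tableau, of shape $\sqtableau{w}{n}\SNminus\lambda$, with entries in $[n]$; glue the two pieces to form a filling $R$ of $\sqtableau{w}{n}$. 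I then need to check (a) $R$ is semistandard, (b) $R$ has weight $\kmu{w-1}$, and (c) applying the forward algorithm to $R$ recovers $(P,Q)$. Part (c) is essentially bookkeeping once the complement map is known to be involutive (\Lemref{lem:complement operation is a bijection}): step $1$ of the algorithm strips off exactly the $Q'$ piece and un-rotates/relabels it back to $Q$, and step $2$ applies $\SNcomplement[w,n]{\cdot}$ to $\SNcomplement[w,n]{P}$, returning $P$. Likewise running the inverse on the output of the forward map returns the original $R$ by the same involutivity. Part (b): the $n$ boxes of $Q'$ carry the $n$ values $[n+1,2n]$ once each, and the complement region carries values in $[n]$; I claim each column of $\SNcomplement[w,n]{P}$ together with the column of $R$ above/below it accounts for the multiplicities — concretely, the $i$th column of $R$ consists of $\colcomp{P_{w-i+1}}$ stacked on top of the rotated relabeled $Q_{w-i+1}$, and because $Q$ has $|\lambda_{w-i+1}|$ boxes in that column, the column of $R$ has $n-|\lambda_{w-i+1}|$ entries from $[n]$ and $|\lambda_{w-i+1}|$ entries from $[n+1,2n]$; summing the small entries across columns, and using that $\colcomp{P_{w-i+1}}=[n]\setminus P_{w-i+1}$ so value $v\in[n]$ appears in column $i$ of $R$ iff $v\notin P_{w-i+1}$, one gets that $v$ appears $w - (\text{number of columns of }P\text{ containing }v)$ times among the small entries; since $P$ is standard, $v$ appears in exactly one column of $P$, hence appears $w-1$ times in $R$ if $v\le n$ — matching $\kmu{w-1} = \vec{(w-1)}_n \oplus \vec{1}_n$ — and each of $n+1,\dots,2n$ appears once.

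The main obstacle is part (a): verifying that the glued filling $R$ is semistandard, i.e. columns strictly increasing and rows weakly increasing, at the seam where $\SNcomplement[w,n]{P}$ meets the rotated relabeled $Q$. Within the top block this is \Lemref{lem:complement operation produces a tableau}; within the bottom block it follows because a $180^\circ$ rotation of a standard tableau with the order-reversing relabel $j\mapsto 2n-j+1$ is again column-strict and row-weak (indeed row-strict); and crucially the entries in the bottom block all lie in $[n+1,2n]$ while those in the top block lie in $[n]$, so any column- or row-comparison across the seam is automatically satisfied. The remaining subtlety is that the two blocks actually fit together into the rectangle $\sqtableau{w}{n}$ with no overlap and no gap — this is exactly the content of \Defref{def: tableau subtraction}, since $Q$ has shape $\lambda$, its rotation occupies the bottom-right $\lambda$-rotated region, and $\SNcomplement[w,n]{P}$ has shape $\sqtableau{w}{n}\SNminus\lambda$ by \Lemref{lem:complement operation produces a tableau} (equivalently Claim \ref{claim:complement gives the right shape}). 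I would close by noting that the composition in both orders is the identity, by the involutivity established in \Lemref{lem:complement operation is a bijection} and the fact that steps $1$ and $2$ are individually reversible, which gives the bijection and completes the proof.
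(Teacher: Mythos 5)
Your proposal is correct and follows essentially the same route as the paper: the paper's proof also argues that each stage of the algorithm (splitting off $Q$, relabeling, complementing) is individually reversible and invokes \Lemref{lem:complement operation is a bijection}, so composing the inverses gives the bijection. Your version simply carries out in detail the verification that the reconstructed $R$ is semistandard of weight $\kmu{w-1}$, which the paper asserts without elaboration.
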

\begin{proof}
    The algorithm is reversible at every stage: the splitting of $R$ into $R \SNminus \lambda$ and $Q$ is invertible; the relabeling of the elements in $Q$ is a bijection from $\{1,\ldots,n\}$ to $\{n+1,\ldots,2n\}$; and the complement operation is an invertible operation (\Lemref{lem:complement operation is a bijection}) that sends a tableau of shape $\sqtableau{w}{n} \SNminus \lambda$ and weight $\kmu{w-1}$ to a standard tableau of shape $\lambda.$ We may invert the algorithm by reversing the steps: given a pair of tableaux $(P,Q)$ of shape $\lambda \vdash n$ and width at most $w$, we can reconstruct a $\sqtableau{w}{n}$ shaped tableau $R$ with weight $\kmu{w-1}$. Since each of the stages of the algorithm are bijections, the algorithm itself is a bijection.
    
\end{proof}

\begin{claim}[Complement gives the right shape]
    Let $\lambda$ be a Young diagram. Let $A$ be a tableau with shape $\sqtableau{w}{n} - \lambda.$ Then, $\SNcomplement[w,n]{A}$ has shape $\lambda$.
    \label{claim:complement gives the right shape}
\end{claim}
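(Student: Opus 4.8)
The plan is to prove the claim by a direct bookkeeping computation on column lengths. "Having shape $\lambda$" is a statement only about the sequence of column sizes, so the question of whether $\SNcomplement[w,n]{A}$ is actually a legitimate tableau (rows non-decreasing, columns strictly increasing) is irrelevant here and is handled separately by Claim \ref{claim:column complement correctly reverses order}; all I need is to match the column lengths of $\SNcomplement[w,n]{A}$ with those of $\lambda$ and to check they are weakly decreasing.

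First I would pad $\lambda$ with trailing empty columns so that it has exactly $w$ columns $\lambda_1, \dots, \lambda_w$, some possibly empty; this is harmless and makes the width-$w$ operations well defined on every index $1 \le i \le w$. By Definition \ref{def: tableau subtraction} applied with $h = n$, the tableau $A$ of shape $\sqtableau{w}{n} - \lambda$ has columns of size $|A_i| = n - |\lambda_{w-i+1}|$ for $1 \le i \le w$. Next I would unwind the definition of the tableau complement: for $1 \le i \le w$ we have $(\SNcomplement[w,n]{A})_i = \colcomp[n]{A_{w-i+1}}$, and since complementing a column inside $[n]$ produces a column of size $n$ minus the original size, together with the fact that the index reversal $i \mapsto w-i+1$ is an involution (so $w - (w-i+1) + 1 = i$), we get
\[
    |(\SNcomplement[w,n]{A})_i| = n - |A_{w-i+1}| = n - \bigl( n - |\lambda_{w-(w-i+1)+1}| \bigr) = |\lambda_i|.
\]
Thus $\SNcomplement[w,n]{A}$ has exactly the column lengths of $\lambda$; these are weakly decreasing because $\lambda$ is a Young diagram, so after dropping trailing empty columns we recover $\lambda$ itself.

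I do not expect a genuine obstacle — the content of the proof is the cancellation $i \mapsto w-i+1 \mapsto i$ — but there are two bookkeeping points worth stating explicitly. One is that $\sqtableau{w}{n} - \lambda$ really is a Young diagram, so that the hypothesis "$A$ is a tableau of shape $\sqtableau{w}{n} - \lambda$" is not vacuous: from $|\lambda_1| \ge |\lambda_2| \ge \cdots$ one gets $n - |\lambda_w| \ge \cdots \ge n - |\lambda_1|$, i.e.\ $|A_1| \ge |A_2| \ge \cdots$. The other is the padding convention above, which ensures the width-$w$ complement neither references columns of $\lambda$ past index $w$ nor drops short columns. With those in place, the displayed identity is the entire argument.
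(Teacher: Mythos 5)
Your proof is correct and follows essentially the same route as the paper's: both unwind Definition \ref{def: tableau subtraction} to get $|A_i| = n - |\lambda_{w-i+1}|$ and then cancel the index reversal under the column complement to recover $|\lambda_i|$. The extra bookkeeping you add (padding with empty columns, checking the column lengths are weakly decreasing) is harmless and slightly more careful than the paper's version, but does not change the argument.
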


\begin{proof}
    By Definition \ref{def: tableau subtraction} there are exactly $n - |\lambda_{w - i + 1}|$ distinct elements in column $A_i$, since $A$ is formed by subtracting a \emph{rotated} $\lambda$ from the square tableau of height $n$. The elements are distinct because columns in a tableau must be strictly increasing. Therefore, 
    \[
        |(\SNcomplement[w,n]{A})_{w-i+1}| = n - |A_i| = \lambda_{w - i + 1}.
    \] 
    This shows that all the columns of $\SNcomplement[w,n]{A}$ have the correct height.
\end{proof}

The following claim is stated in a general form so that it applies to the skew tableau considered in \Secref{sec:skewed weights}. We only need the case $k = 0$ for \Lemref{lem:complement produces a standard tableau}.

\begin{claim}[Complement gives the right entries]

For integers $n,w > 1$, and $k$ such that $ -n/(w-1) \leq k \leq n$, let
$\lambda \vdash n+k(w-1)$ be a diagram with width at most $w$ and let $A$ be a tableau with shape $\sqtableau{w}{n} - \lambda$ 
with weight $\overrightarrow{(w-1)}_{n-k}$. Then, $\SNcomplement[w,n-k]{A}$ contains exactly one copy of 
each element in $[n-k]$.
\label{claim:complement gives correct entries in tableau}
\end{claim}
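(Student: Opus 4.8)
The plan is to reduce the statement to an elementary count over the columns of $A$, using only the weight hypothesis together with the fact that columns of a semistandard tableau are strictly increasing; no deep ingredient is needed.

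First I would record that every entry of $A$ lies in $[n-k]$, since the weight $\overrightarrow{(w-1)}_{n-k}$ only uses the symbols $1,\dots,n-k$; hence each complemented column $\colcomp[n-k]{A_i}=[n-k]\setminus A_i$ is a genuine column, with distinct strictly increasing entries, and $\SNcomplement[w,n-k]{A}$ is well defined. Now fix a value $j\in[n-k]$. By the weight hypothesis, $j$ occurs exactly $w-1$ times in $A$. Since each column of $A$ is strictly increasing, $j$ occurs at most once in any one column, so its $w-1$ occurrences lie in $w-1$ distinct columns among the $w$ column slots of the shape $\sqtableau{w}{n}-\lambda$. Therefore there is exactly one index $i_j\in\{1,\dots,w\}$ for which $j\notin A_{w-i_j+1}$.

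Next I would translate this through the definition of the tableau complement. Because $j\in[n-k]$, we have $j\in(\SNcomplement[w,n-k]{A})_i=\colcomp[n-k]{A_{w-i+1}}$ if and only if $j\notin A_{w-i+1}$; by the previous step this holds for exactly one value of $i$, namely $i=i_j$, and within that single column $j$ occurs exactly once since column entries are distinct. Hence $j$ appears exactly once in $\SNcomplement[w,n-k]{A}$, and letting $j$ range over $[n-k]$ gives the claim. One can cross-check via box counts: $\SNcomplement[w,n-k]{A}$ has $\sum_{i=1}^{w}\bigl((n-k)-|A_i|\bigr)=w(n-k)-(w-1)(n-k)=n-k$ boxes, exactly enough room for one copy of each element of $[n-k]$.

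I do not expect a real obstacle here; the only point requiring a little care is the bookkeeping when $\lambda$ has a full column of height $n$, which forces a column of $A$ to be empty. This is absorbed automatically: an empty column slot vacuously does not contain $j$, and the count ``$w-1$ occurrences in $w-1$ of the $w$ slots'' already shows that at most one slot can be empty, so no separate case analysis is needed. Note also that $k$ enters the argument only through the subscript $n-k$, consistent with the remark that only the case $k=0$ is used in \Lemref{lem:complement produces a standard tableau}.
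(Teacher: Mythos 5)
Your proof is correct and follows essentially the same route as the paper's: the weight hypothesis gives $w-1$ copies of each $j\in[n-k]$, strictly increasing columns forbid repeats within a column, so each $j$ is missing from exactly one of the $w$ columns and hence appears exactly once in the complement. Your version is slightly more explicit than the paper's (naming the unique missing column via a counting argument, noting the empty-column case, and adding the box-count sanity check), but the underlying idea is identical.
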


\begin{proof}
Since there are at most $w$ columns in $A$, and no column can have a duplicate element (if
a column did then it would not be strictly increasing), there can be at most one
column that does not contain some element $v$.  Let $\rho(v)$ be the only column
of $A$ such that $A_{\rho(v)}$ does not contain $v \in [n-k]$.  This
means that $v \in (\SNcomplement[w,n-k]{A})_{w - \rho(v) +1}$, and so all elements $[n-k]$
appear in $\SNcomplement[w,n-k]{A}$. Since $\rho(v)$ is the unique column that does not contain each $v \in [n-k]$, this
also shows that it appears exactly once in $\SNcomplement[w,n-k]{A}$; since the complement is with respect to $n-k$, these are the only elements that appear in $\SNcomplement[w,n-k]{A}$.
\end{proof}

For $2$ columns $w, v$, we say $w \preceq v$ if $|w| \geq |v|$ and $\forall i \leq |v|, w_i \leq v_i$. Thus $P$ is a semistandard tableau with columns $P_i$ if for all $i \leq j$ we have $P_i \preceq P_j$.

\begin{claim}[Column complement reverses $\preceq$ order]
    Let $w \preceq v$ be two columns. Then, $\colcomp[]{v} \preceq \colcomp[]{w}$. 
    \label{claim:column complement correctly reverses order}
\end{claim}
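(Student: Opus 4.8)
The plan is to work directly from the definitions of $\preceq$ and the column complement. Recall that $w \preceq v$ means $|w| \geq |v|$ and $w_i \leq v_i$ for all $i \leq |v|$; and $\colcomp[]{C}$ is the column whose entry set is $[n] \setminus C$, arranged in increasing order. We must show $\colcomp[]{v} \preceq \colcomp[]{w}$, i.e., $|\colcomp[]{v}| \geq |\colcomp[]{w}|$ and $(\colcomp[]{v})_i \leq (\colcomp[]{w})_i$ for all $i \leq |\colcomp[]{w}|$. The cardinality inequality is immediate: $|\colcomp[]{v}| = n - |v| \geq n - |w| = |\colcomp[]{w}|$, which also flips correctly relative to $|w| \geq |v|$.

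For the entrywise inequality, I would use the standard characterization of the $j$-th smallest element of a set in terms of a counting (rank) function. For a column $C \subseteq [n]$ and a value $x \in [n]$, let $r_C(x) = |\{c \in C : c \leq x\}|$ be the number of elements of $C$ that are $\leq x$. Then $(\colcomp[]{C})_i \leq x$ if and only if $|\{y \in [n] \setminus C : y \leq x\}| \geq i$, i.e. $x - r_C(x) \geq i$. So to prove $(\colcomp[]{v})_i \leq (\colcomp[]{w})_i$ it suffices to show that for every $x \in [n]$, $x - r_v(x) \geq x - r_w(x)$ whenever $x - r_w(x) \geq i$ — equivalently, it suffices to show $r_v(x) \leq r_w(x)$ for all $x$, since then the threshold $x$ at which $\colcomp[]{w}$ reaches rank $i$ is at least the threshold at which $\colcomp[]{v}$ reaches rank $i$.

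So the crux reduces to the claim: $w \preceq v$ implies $r_v(x) \leq r_w(x)$ for every $x \in [n]$. This is where the hypothesis $w_i \leq v_i$ does the work. Fix $x$ and suppose $r_v(x) = \ell$, so $v_1 < \cdots < v_\ell \leq x < v_{\ell+1}$ (interpreting $v_{\ell+1}$ as $+\infty$ if $\ell = |v|$). If $\ell \leq |v| \leq |w|$, then since $w_\ell \leq v_\ell \leq x$, we get $w_1 < \cdots < w_\ell \leq x$, hence $r_w(x) \geq \ell = r_v(x)$. (If $\ell = 0$ the inequality is trivial.) This establishes $r_v(x) \leq r_w(x)$ for all $x$, and combining with the rank characterization above gives $(\colcomp[]{v})_i \leq (\colcomp[]{w})_i$ for all $i \leq |\colcomp[]{w}|$, completing the proof.

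I expect the only mildly delicate point to be bookkeeping around the rank function — precisely, the equivalence ``$(\colcomp[]{C})_i \leq x \iff x - r_C(x) \geq i$'' and making sure the index ranges ($i \leq |\colcomp[]{w}| = n - |w|$, and $\ell \leq |v|$) are handled without off-by-one errors, especially in the boundary cases where a column is empty or $\ell$ equals a cardinality. None of this is deep; once the rank reformulation is in place the argument is a short monotonicity chain. An alternative, essentially equivalent, packaging would be a direct induction on $i$ comparing $(\colcomp[]{v})_i$ and $(\colcomp[]{w})_i$ by tracking which of the two ``skips'' an element of $v$ versus $w$, but the rank-function version is cleaner to write.
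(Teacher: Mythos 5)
Your proof is correct, but it takes a genuinely different route from the paper's. The paper proves this claim by a sequential induction over the elements of $[n]$: it ``grows'' the two complements $\overline{v}$ and $\overline{w}$ one candidate element at a time, splitting into four cases according to whether the current element lies in $v$, in $w$, in both, or in neither, with the only nontrivial case being the one where an element joins $\overline{w}$ but not $\overline{v}$ (there one must argue that $\overline{v}$ was already strictly longer, which is where the hypothesis $w_i \leq v_i$ enters). Your rank-function argument replaces this bookkeeping with a single monotonicity statement: the characterization $(\overline{C})_i \leq x \iff x - r_C(x) \geq i$ reduces everything to showing $r_v(x) \leq r_w(x)$ for all $x$, and that follows in one line from $w_\ell \leq v_\ell$ together with $|v| \leq |w|$. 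The two approaches encode the same underlying fact --- that $w$ ``reaches'' any threshold $x$ at least as fast as $v$ does --- but yours packages it without case analysis and handles the boundary situations (empty columns, $\ell = |v|$) uniformly, whereas the paper's incremental argument must separately check the base case and the strictness in its third case. Each step of your reduction (the cardinality inequality, the rank characterization, and the final comparison $r_w(x) \geq \ell = r_v(x)$) checks out, so there is no gap; if anything, your version is the cleaner one to write down.
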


\begin{proof}
Let $U = [n]$ be a set of integers such that $v,w \subset U$. We omit the superscript in $\colcomp[n]{x}$ in the following.
Let $w$ and $v$ be two columns such that $w \preceq v$. The proof proceeds by induction on the elements of $U$. We imagine that we are growing $\overline{w}$ and $\overline{v}$ by examining each element of $U$ sequentially and adding it to either $\left( \overline{v} \right)_q \OR \left( \overline{w} \right)_q$, where $\left( \overline{v} \right)_q := \{ x \in U \setminus v, x \leq a_q \}$ to be the elements of $U$ added to $\left( \overline{v} \right)_q$ by step $q$ and $\left( \overline{w} \right)_q$ similarly. 

Let $\left( \overline{v} \right)_0$ and $\left( \overline{w} \right)_0$ be empty, and suppose the first $q$ numbers have been examined. As the induction hypothesis, suppose $|( \overline{v} )_q| \geq |(
\overline{w})_q|$ and $\overline{v}_i \leq \overline{w}_i$ for $i=1,\ldots,|(
\overline{w})_q|$. Then, there are four possibilities for the $(q+1)$\textsuperscript{th} element: 
\begin{enumerate}
    \item \label{case1} $q+1 \in v \cap w$, then $q+1$ is not added to either $\left( \overline{v} \right)_q \OR \left( \overline{w} \right)_q$. 
    \item \label{case2} $q+1 \in \overline{v} \cap w,$ then $q+1$ is added to $\left( \overline{v} \right)_q$.
    \item \label{case3} $q+1 \in v \cap \overline{w}$, then $q+1$ is added to $\left( \overline{w} \right)_q$.
    \item \label{case4} $q+1 \in \overline{v} \cap \overline{w}$, then $q+1$ is added to both $\left( \overline{v} \right)_q \AND \left( \overline{w} \right)_q$.
\end{enumerate}
Then, in case \ref{case1}, $q+1$ is not added to either
set, and hence $|( \overline{v} )_{q+1}| = |( \overline{v} )_{q}| \geq |(
\overline{w} )_{q+1}|$ and the induction hypothesis holds.   
In case \ref{case2}, $|( \overline{v} )_{q+1}| \geq |(
\overline{w} )_{q+1}|$ and since no element was added to $(
\overline{w})_{q+1}$, $\overline{v}_{|( \overline{w} )_{q} + 1|} \leq
\overline{w}_{|( \overline{w})_{q} + 1|}$ (if $\left( \overline{w} \right)_{q+1}$ is added eventually).

If we are in case \ref{case3} then we claim that $|( \overline{v})_q| > |( \overline{w} )_q|.$ Suppose for contradiction that 
$\left|( \overline{v} )_q \right| = \left| ( \overline{w} )_q \right|$, 
then for $r = q - |(\overline{w})_q|$ we have $q+1 = v_{r + 1}$, and hence $w_{r + 1} > v_{r+1}$ or $|v| > |w|.$ This contradicts $w \preceq v$. 
Case \ref{case4} is trivial, and hence the induction step is proved. To establish the $q=0$ case, we essentially repeat the above argument with empty $( \overline{v})_0 \AND ( \overline{w})_0$. It is easy to see here that case $3$ cannot occur. The claim is proved when $q = n,$ as then $\left( \overline{v} \right)_q = \overline{v}$ and likewise for $w.$ 
\end{proof}

\section{Skewed Weights}
\label{sec:skewed weights} 

\begin{algorithm}
Generalized Algorithm
\label{algorithm}
\end{algorithm}

Let $2 \leq w \leq n.$ 
\begin{enumerate}
    \item \label{item:form the q'' tableau} Let $R$ be a tableau of shape $\sqtableau{w}{n}$ and content $\kmu[n,k]{w-1}$. It contains $w-1$ copies of the numbers $1$ through $n-k$ and $1$ copy of the numbers $n-k+1$ through $2n + k(w - 2)$. As before, define the skew tableau $Q''$ by choosing cells in $R$ containing the numbers $\{  n-k+1 , n-k+2, \ldots , 2n+k(w-2) \}$.
        Let $Q'$ be the standard tableau obtained by first rotating $Q''$ by 180 degrees and then applying the map $x \mapsto 2n + k(w-2) - x + 1$ to each entry in $Q''$. Let $\lambda$ be the shape of $Q'$.
    \item Let $P'' = \SNcomplement[w, n-k]{R \SNminus Q'}$.
 
        \emph{The following three steps are new.}
    \item \label{item:swapping step} If $k > 0$, let $(P',Q) = (P'',Q')$; otherwise, let $(P',Q) = (Q',P'')$. That is, if $k < 0$ we swap the two tableaux before proceeding. 

    \item \label{item:final renumbering of P'} We apply the map $x \mapsto x + |kw|$ to each element in $P'$ so that it is a standard tableau with values starting at $|kw| + 1$. 
    
    \item 
Let $M$ be a fixed tableau of shape $\sqtableau{|k|}{w}$ containing the elements $[|k|w].$ Let $P$ be the standard tableau obtained by placing $M$ on top of $P'$ such that the first columns of $P'$ and $M$ are aligned.

\end{enumerate}

Let $P' \AND Q$ have shapes $\lambda' \AND \lambda$ respectively. In Lemma \ref{lem:forming P from P' and the rectangular head is a proper tableau}, we show that each column of $\lambda'$ has $|k|$ fewer entries than the corresponding column in $\lambda$, and step $5$ produces a standard tableau of shape $\lambda$. There are $A_{|k|,w}$ choices for the rectangular standard tableau $M$ and hence it appears in \Thmref{thm:generalized theorem with skewed tableau}.
 
\begin{lemma}
    Placing a standard tableau $M$ of shape $\sqtableau{|k|}{w}$ on top of $P'$ such that the first columns of $P'$ and $M$ are aligned, produces a standard tableau $P$ of the same shape as $Q$.
    \label{lem:forming P from P' and the rectangular head is a proper tableau}
\end{lemma}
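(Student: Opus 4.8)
The plan is to verify two things: that $P$ is a legitimate standard tableau, and that its shape equals $\lambda$, the shape of $Q$. I would begin by establishing the shape claim, since the tableau-validity claim depends on knowing that $M$ fits exactly on top of $P'$. Recall $P' = P''$ (up to the shift $x \mapsto x + |kw|$ from step \ref{item:final renumbering of P'}, which does not affect the shape), and $P'' = \SNcomplement[w,n-k]{R \SNminus Q'}$. By \Claimref{claim:complement gives the right shape} applied with the parameter $n-k$ in place of $n$, the shape of $P''$ is the shape $\mu$ of the diagram $\sqtableau{w}{n-k} \SNminus (\text{shape of } R \SNminus Q')$; one computes $|\mu_i| = (n-k) - |(R\SNminus Q')_{w-i+1}|$. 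On the other hand $Q'$ has shape $\lambda$ with $|\lambda| = n + k(w-1)$, and since $R \SNminus Q'$ is $R$ with a $180^\circ$-rotated $\lambda$ removed from the bottom-right of the $\sqtableau{w}{n}$ rectangle, $|(R \SNminus Q')_{i}| = n - |\lambda_{w-i+1}|$. Combining, $|\mu_i| = (n-k) - (n - |\lambda_i|) = |\lambda_i| - k$, i.e.\ column $i$ of $\lambda'$ has exactly $|k|$ fewer boxes than column $i$ of $\lambda$ when $k>0$. (When $k<0$ the roles of $P''$ and $Q'$ are swapped in step \ref{item:swapping step}, and one runs the same arithmetic with $Q$ now equal to $P''$; the key point is that in every case the larger of the two tableaux exceeds the smaller by a $\sqtableau{|k|}{w}$ block on the left, which is exactly what a height-$w$, width-$|k|$ rectangle added on top of each of the first (and only) nonempty columns contributes.) This column-by-column count is the crux; I expect it to be the main obstacle, mostly in getting the bookkeeping of the $n$ versus $n-k$ shifts and the rotation conventions exactly right, and in handling the $k<0$ swap uniformly.

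Granting the shape count, placing $M$ (shape $\sqtableau{|k|}{w}$, entries $[|k|w]$) on top of $P'$ with left columns aligned yields a diagram of exactly shape $\lambda$: for each column index $i \le w$ the combined height is $|k| + (|\lambda_i| - |k|) = |\lambda_i|$, and columns $i$ for which $\lambda_i$ is empty receive nothing from either piece. So the shape is correct.

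Next I would check that $P$ is semistandard, hence (being filled with distinct entries) standard. Columns: within $M$ the columns are strictly increasing by hypothesis; within $P'$ they are strictly increasing since $P'$ is standard (it is the shift of the tableau complement $P''$, which is standard by \Lemref{lem:complement operation produces a tableau} together with \Claimref{claim:complement gives correct entries in tableau}); and at the junction, every entry of $M$ lies in $[|k|w]$ while every entry of $P'$ is $\ge |kw|+1$ after the step-\ref{item:final renumbering of P'} shift, so the last entry of a column of $M$ is strictly below the first entry of the corresponding column of $P'$. Rows: within $M$ rows are strictly increasing since $M$ is standard; within $P'$ likewise; and for a row that straddles the boundary there is nothing to check, because $M$ is a \emph{full} $|k|\times w$ rectangle — every row of $M$ already occupies all $w$ columns, so no row of $P'$ sits in the same horizontal level as a row of $M$. (This is precisely why $M$ is taken to be rectangular of width $w$, the full width: it guarantees the rows of $M$ and of $P'$ never interleave.) Hence all rows are strictly increasing and all columns strictly increasing, so $P$ is standard.

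Finally I would note the entry set is correct: $M$ contributes $[|k|w]$ and $P'$ contributes, after the shift, the numbers $|k|w+1,\dots,|k|w + |\lambda'|$; since $|\lambda'| = |\lambda| - |k|w = (n + k(w-1)) - |k|w$ equals $m$ in the $k>0$ case (and the analogous count holds after the swap when $k<0$), $P$ is a standard tableau on $[m]$ of shape $\lambda$, matching $Q$. This completes the proof, modulo the shape arithmetic flagged above as the main technical point.
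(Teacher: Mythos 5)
Your proof follows the paper's argument essentially verbatim: the key step in both is the column-length computation $|P''_i| = (n-k) - |(R\SNminus Q')_{w-i+1}| = (n-k)-(n-|Q_i|) = |Q_i| - k$ (with the swap handling $k<0$), followed by the observation that the renumbering in step 4 keeps the entries of $M$ and $P'$ disjoint so the assembled $P$ is standard of shape $\lambda$. You are somewhat more explicit than the paper about checking the row and column conditions at the junction, and you have two cosmetic slips --- the parenthetical ``height-$w$, width-$|k|$'' momentarily transposes the rectangle you later correctly describe as having $|k|$ rows each occupying all $w$ columns, and the final ``$|\lambda'| = (n+k(w-1)) - |k|w$ equals $m$'' should say that $|k|w + |\lambda'| = m$ --- but the approach and substance coincide with the paper's proof.
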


\begin{proof}[Proof of \Lemref{lem:forming P from P' and the rectangular head is a proper tableau}]
Consider $P''$ and $Q'$ before the swapping in step \ref{item:swapping step}. Recall that $P''_i = \SNcomplement[n-k] {(R \SNminus Q')_{w-i+1}}$ from the algorithm. Also recall that $|Q'_i| + |(R \SNminus Q')_{w-i+1}| = n$ since $R$ has $n$ rows. {Then, from step $2:$}

$$|P''_i| = |\SNcomplement[w,n-k]{(R \SNminus Q')_{w-i+1}}| = n-k - |(R \SNminus Q')_{w-i+1}|$$
$$ = n - k - (n-|Q_i|) = |Q_i| - k.$$
This works when both $k < 0$, and $k > 0$, and implies that either $P''$ has $-k$ more rows than $Q$, or that $P''$ has $k$ fewer rows than $Q$. This holds for all the columns $1 \leq i \leq w$ and all $k$ in the range specified in \Thmref{thm:generalized theorem with skewed tableau}. So this implies that if we attach a square tableau of shape $\sqtableau{|k|}{w}$ to the top of $P'',$ then the columns of $P''$ and $Q'$ have the same sizes. Since step $4$ ensures that $P'$ does not contain the numbers in $[|k|w],$ $P$ is a standard tableau.

\end{proof}

\begin{proof}[Proof of \Thmref{thm:generalized theorem with skewed tableau}]
Let $P$ be the tableau obtained by attaching $M$ to $P'$ in step $5.$ {By Lemma \ref{lem:forming P from P' and the rectangular head is a proper tableau}, } we have a pair $(P,Q)$ with the same shape $\lambda$. By the RSK algorithm, this corresponds to a permutation $\sigma$ of size $m = |\lambda|,$ where $m$ is defined in the statement of the theorem.

Since $M$ contains the numbers $1$ through $|k|w$, it is a well known fact that there must be $k$ disjoint increasing subsequences in $\sigma$ each of length $|k|$ using the numbers $1$ through $|k|w$ \citep[Chapter 3, Lemma 1]{MR1464693}. Therefore $P$ must also have at least $|k|$ such disjoint increasing subsequences. 

As before, it is easy to see that all the steps of the algorithm are invertible. The parameters $k,w \AND n$ are fixed from earlier. The reverse algorithm starts with a permutation in $S_m$ with $|k|$ disjoint LIS of length $w$ made up of the numbers $[|k|w]$. We use the RSK algorithm to form the pairs of tableau $(P,Q)$, and drop the first $|k|$ rows of $P$ and relabel its elements to form $P'$. Then, depending on the sign of $k$, we swap the tableau $P'$ and $Q$ to obtain $P''$ and $Q'$. 

We then apply the (invertible) complement operation to $P''$ and then relabel the elements of $Q$ by inverting the map in step \ref{item:form the q'' tableau} to form $Q'$. Then $\SNcomplement[w, n-k]{P''}$ can be joined to a rotated $Q'$ to obtain $R$, a rectangular tableau of shape $\sqtableau{w}{n}$ and content $\kmu[n,k]{w-1}$. 
\end{proof}

\begin{proof}[Proof of Corollary 2.3]
Note that when $k=1$ then the block $M$ has only one choice:
it must consist of the elements $[w]$ in a diagram of shape $\sqtableau{1}{n},$
and we immediately get a bijection between permutations where $(1, 2, \cdots, w)$ is an LIS. 
\end{proof}

\begin{proof}[Proof of Corollary \ref{thm:kostka numbers rectangular symmetry}]

Without loss of generality we may assume $0 < k \leq n,$ (for $k<0,$ replacing $k$ with $-k$ permutes the left and right hand sides of the equation) and $w > 1.$ Fix a $\sqtableau{w}{k}$ shaped tableau $M.$ 

\Thmref{thm:generalized theorem with skewed tableau} shows that $\kostka{\sqtableau{w}{n}}{\kmu[n,k]{w-1}}$ counts the number of pairs $P,Q$ of Young tableaux having the same shape with width $w,$ $M$ occupying the first $k$ rows of $P,$ and $|P| = m = n +k(w-1).$ Analogously, it shows that $\kostka{\sqtableau{w}{n+k(w-2)}}{\kmu[n+k(w-2),-k]{w-1}}$ counts the exact same set, since the width and $M$ are unchanged, and $|P| = n+k(w-2) + k = m.$

\end{proof}

\printbibliography

\end{document}